\newtheorem{theorem}{Theorem}[section]
\newtheorem{remark}{Remark}[section]
\journalname{Applied Mathematics Letters}
\begin{document}

\begin{frontmatter}




\dochead{}

\title{Long-time behavior for the Kirchhoff diffusion problem with magnetic
fractional Laplace operator}


\author[J. Zuo]{Jiabin Zuo}
\ead{zuojiabin88@163.com}
\address[J. Zuo]{School of Mathematics and Information
	Science, Guangzhou University, Guangzhou, 510006, China}

\author[J. Honda Lopes]{Juliana Honda Lopes}
\ead{juhonlopes@gmail.com}
\address[J. Honda Lopes]{Instituto de Matem\'atica e Estat\'istica, Universidade de
S\~ao Paulo, Rua do Mat\~ao, 1010, S\~o Paulo - SP CEP 05508-090,
Brazil}

\author[v,d,r,i]{Vicentiu D. R\u{a}dulescu\corref{mycorrespondingauthor}}\cortext[mycorrespondingauthor]{Corresponding author. The research of V. D. R\u{a}dulescu is supported by the grant ``Nonlinear Differential Systems in Applied Sciences" of the Romanian Ministry of Research, Innovation and Digitization, within PNRR-III-C9-2022-I8 (Grant No. 22). Jiabin Zuo is supported by the Guangdong Basic and Applied Basic Research
	Foundation (2022A1515110907) and the Project funded by China
	Postdoctoral Science Foundation (2023M730767).}
\ead{radulescu@inf.ucv.ro}
\address[v]{Faculty of Applied Mathematics, AGH University of Science and Technology, 30-059 Krak\'{o}w, Poland}
\address[d]{Brno University of Technology, Faculty of Electrical Engineering and Communication, Technick\'a 3058/10, Brno
	61600, Czech Republic}
\address[r]{Department of Mathematics, University of Craiova, Street A.I. Cuza 13, 200585 Craiova, Romania}
\address[i]{Simion Stoilow Institute of Mathematics of the Romanian Academy, Calea Grivi\c tei 21, 010702 Bucharest, Romania}
\begin{abstract}
We consider a Kirchhoff-type diffusion problem driven by the magnetic fractional Laplace operator.
The main result in this paper establishes  that infinite time blow-up cannot occur for the problem.
The proof is based on the potential well method, in relationship with energy and Nehari functionals.
\end{abstract}

\begin{keyword}
Diffusion problem \sep Kirchhoff function \sep magnetic fractional Laplacian \sep
    Nehari functional \sep potential function

\MSC[2020 Mathematics Subject Classification] 35R11 \sep 35J20 \sep 35J60

\end{keyword}

\end{frontmatter}


\section{Introduction}

Let $\Omega\subset\mathbb{R}^n$ ($n>2s$) be a bounded
domain with smooth boundary.
In this paper we study the following Kirchhoff-type diffusion problem
\begin{equation}\label{problem1}
\begin{cases}
u_t+M\left(\|u\|_{X_{0,A}}^2\right)(-\Delta)_A^su= f(|u|)u,&\mbox{ in }\Omega\times(0,T),\\
u(x,t)=0,&\mbox{ in }(\mathbb{R}^n\setminus\Omega)\times(0,T),\\
u(x,0)=u_0(x),&\mbox{ in }\Omega.
\end{cases}
\end{equation}

Given $s \in (0,1)$ and $A\in
L^{\infty}_{loc}(\mathbb{R}^n)$, we define the magnetic fractional Laplace operator defined $(-\Delta)_A^s$
 by
\begin{equation}\label{operator}
(-\Delta)_A^su(x,t)=2\lim_{\varepsilon\rightarrow0^+}
\int_{\mathbb{R}^n\setminus B(x,\varepsilon)}\frac{u(x,t)-e^{i(x-y)
        \cdot A(\frac{x+y}{2})}u(y,t)}{|x-y|^{n+2s}}\ dy,
    \end{equation} for
$x\in\mathbb{R}^n$ and $u\in C_0^{\infty}(\mathbb{R}^n,\mathbb{C})$. This differential operator is weighted by a Kirchhoff-type function $M:[0,\infty)\rightarrow[0,\infty)$ (see \cite{Kirchhoff1883}), satisfying  $(M1)$-$(M2)$ below. When $A \equiv 0$ in \eqref{operator}, then we have the usual fractional Laplacian differential operator denoted by $(-\Delta)^s$. Such differential operator was studied in the context of problems in quantum mechanics and of the motion of chains or arrays of particles connected by elastic springs, as well as in the context of problems of unusual diffusion processes in turbulent fluids and of mass transport in fractured media. We refer to  \cite{Applebaum} (L\'evy processes),  \cite{Caffarelli} (nonlocal diffusions, drifts and games),  \cite{Avron,ji1,ji2, Li,wen,Radulescu} for
other classes of nonlocal operators. In all the aforementioned works, the authors deal with Schr\"odinger
operators with magnetic fields. For instance,  \cite{Li} establish the existence of nontrivial solutions to a parametric fractional Schr\"odinger equation in the case of critical or supercritical nonlinearity.
 Next, the operator $(-\Delta)_A^s$ (see \eqref{operator})
 was introduced in \cite{Avenia},  as a
 fractional counterpart of the magnetic Laplacian $(\nabla-iA)^2$,
 where $A:\mathbb{R}^n\rightarrow\mathbb{R}^n$ is a
 $L_{loc}^{\infty}$-vector potential. Zuo \& Lopes \cite{Zuo} established the existence of weak solutions to problem \eqref{problem1}. The strategy is based on the potential well method, hence they obtain global in time
 solutions and blow-up in finite time solutions. Here, we show that the global in time solutions to \eqref{problem1} can not blow-up in infinite time. We also mention  \cite{Zhou} (non-local parabolic
equation in a bounded convex domain),
\cite{Avenia}  (for the equation $(-\Delta)_A^su+u=|u|^{p-2}u$ posed in
$\mathbb{R}^3$), \cite{LWL} (fractional Choquard equation), and \cite{Toscano} (system of Kirchhoff type equations).

\section{Mathematical background and hypotheses}\label{Sec2}

The right framework for the analysis of equation \eqref{problem1} is the function space $X_{0,A}$ (hence $H_A^s(\Omega)$) defined as follows. For an open and bounded
set $\Omega\subset\mathbb{R}^n$ ($n>2s$), let $|\Omega|$ be the
measure of the set $\Omega$. By $L^p(\Omega,\mathbb{C})$ we mean the Lebesgue
space of complex valued functions with norm
$\|\cdot\|_{L^p(\Omega)}$ and inner product
$\langle\cdot,\cdot\rangle$. For $p=2$, $s\in(0,1)$ and $A\in
L^{\infty}_{loc}(\mathbb{R}^n)$, we consider the magnetic Gagliardo
semi-norm defined by
$$[u]_{H^s_A(\Omega)}^2:=\int\int_{\Omega\times\Omega}\frac{|u(x,t)-e^{i(x-y)\cdot A\left(\frac{x+y}{2}\right)}u(y,t)|^2}{|x-y|^{n+2s}}\ dx\ dy.$$

Hence we consider the space $H_A^s(\Omega)$ of functions $u\in
L^2(\Omega,\mathbb{C})$ with $[u]_{H^s_A(\Omega)}<\infty$ and furnished with the norm
$\|u\|_{H^s_A(\Omega)}:=(\|u\|^2_{L^2(\Omega)}+[u]^2_{H^s_A(\Omega)})^{\frac{1}{2}}.$ Referring to  \cite{Fiscella,Vecchi}, we  define
$X_{0,A}:=\{u\in H_A^s(\mathbb{R}^n): u=0 \mbox{ a.e. in }\mathbb{R}^n\setminus\Omega\},$
with the real
scalar product (see \cite{Avenia}) given as {\small
\begin{equation*}\label{product}
\begin{aligned}
\langle
u,v\rangle_{X_{0,A}}:=\mathcal{R}\int\int_{\mathbb{R}^{2n}}\frac{\left(u(x,t)-e^{i(x-y)\cdot
A\left(\frac{x+y}{2}\right)}u(y,t)\right)\overline{\left(v(x,t)-e^{i(x-y)\cdot
A\left(\frac{x+y}{2}\right)}v(y,t)\right)}}{|x-y|^{n+2s}}\ dx \ dy,
\end{aligned}
\end{equation*}}
where, for every $z\in\mathbb{C}$, by $\mathcal{R}z$ we mean the real
part of $z$ and by $\overline{z}$ its complex conjugate. This scalar
product induces the following norm
\begin{equation*}
\|u\|_{X_{0,A}}:=\left(\int\int_{\mathbb{R}^{2n}}\frac{|u(x,t)-e^{i(x-y)\cdot
A\left(\frac{x+y}{2}\right)}u(y,t)|^2}{|x-y|^{n+2s}}\ dx \
dy\right)^{\frac{1}{2}}.
\end{equation*}
We will use  $\xrightarrow{w}$ and $\to$ to denote  weak and strong convergences, respectively.
Our hypotheses on problem \eqref{problem1} are the following:
\begin{itemize}
    \item[(F)] $f\in
    C^1([0,\infty))$, and we can find $C>0$ and $\gamma\geq p$,
    for $p\in(2,2_s^{\ast})$ with $2_s^{\ast}=\frac{2n}{n-2s}$, such
    that
    \begin{equation}\label{f}
    \begin{array}{ll}
    |F(u)|\leq Cu^p, f(u)u^2\leq pCu^p, \mbox{ for } u\geq0,\\
    0<\gamma F(u)\leq f(u)u^2,  u^2(uf'(u)-(p-2)f(u))\geq 0 \mbox{ for }u>0,
    \end{array}
    \end{equation}
    where $F(u):=\int_0^{u}f(\tau)\tau\ d\tau$.
\end{itemize}

Further, the Kirchhoff
function $M:[0,\infty)\rightarrow[0,\infty)$ is as follows:
\begin{enumerate}
\item[(M1)] it is a continuous function and
there exist constants $m_0>0$ and
$\theta\in(1,\frac{2_s^{\ast}}{2})$ such that
    $M(u)\geq
m_0u^{\theta-1}$ for all $u\in[0,\infty)$;

\item[(M2)] there exists a constant
$\mu\in(1,\frac{2_s^{\ast}}{2})$ such that
    $\mu\mathscr{M}(u)\geq M(u)u$ for all $u\in[0,\infty)$,
 where
$\mathscr{M}(u):=\int_0^uM(\tau)\ d\tau.$
\end{enumerate}

Here, $\theta$ and $\mu$ satisfy
the condition:
(P) $2\max\{\theta,\mu\}<p<2_s^{\ast}$.

We consider the $C^1$-functional related to problem \eqref{problem1} and
defined by
\begin{equation}\label{J}
J(u):=\frac{1}{2}\mathscr{M}(\|u\|_{X_{0,A}}^2)-\int_{\Omega}F(|u|)\
dx.
\end{equation}
We have
$
\langle J'(u),\phi\rangle= M(\|u\|_{X_{0,A}}^2)\langle
u,\phi\rangle_{X_{0,A}}-\mathcal{R}\int_{\Omega}f(|u|)u\overline{\phi}\
dx,
$
for any $\phi\in X_{0,A}$, and we  introduce the Nehari functional for  \eqref{problem1} given by
$
I(u):=\langle J'(u),u\rangle.
$
Hence, we can consider the non-negative value $d$ (i.e., mountain pass level) given as $d= \inf_{u \in X_{0,A}\setminus \{0\} \,:\,  I(u)=0} J(u).$
According to Sattinger \cite{Sattinger} and  Payne \& Sattinger \cite{Payne} we know that if the initial energy $J(u_0)$ is less than the mountain pass level $d$, then the solution to problem \eqref{problem1} exists globally if it begins in the stable set
$\mathcal{W}=\{u\in X_{0,A} :I(u)>0, \, J(u)<d \} \cup \{0\},$
 and fails to exist globally if it starts from the unstable set $\mathcal{U}=\{u\in X_{0,A} :I(u)<0, \, J(u)<d \}.$
The functionals $I(\cdot)$ and $J(\cdot)$ are energy functionals of the stationary problem
\begin{eqnarray}\label{problem2}
\left\{
\begin{array}{ll}
M(\|u\|_{X_{0,A}}^2)(-\Delta)_A^su= f(|u|)u,&\mbox{ \text{in} }\Omega,\\
u(x)=0,&\mbox{ in }\mathbb{R}^n\setminus\Omega,
\end{array}
\right.
\end{eqnarray}
and we recall that $u\in X_{0,A}(\Omega)$ is a solution  to
\eqref{problem2} (i.e., stationary solution to problem
\eqref{problem1}) if $\langle J'(u),\phi\rangle=0$, namely
$
M(\|u\|_{X_{0,A}}^2)\langle
u,\phi\rangle_{X_{0,A}}-\mathcal{R}\int_{\Omega}f(|u|)u\overline{\phi}\
dx=0,
$
for all $ \phi\in X_{0,A}(\Omega)$, see also  \cite[Definition 1]{Zuo}. So, $u\in X_{0,A}(\Omega)$  solves
\eqref{problem1} on $(0,T)$ for $T>0$ if $u\in L^{\infty}(0,\infty;L^2(\Omega))\cap L^{\infty}(0,\infty;X_{0,A})
\cap C(0,\infty;L^2(\Omega))$ with $ u_t\in L^2(0,\infty;L^2(\Omega))$ and
\begin{align*} &
\mathcal{R}\int_0^T\int_{\Omega}u_t\overline{\phi}\
dx\, dt+\int_0^T    M(\|u\|_{X_{0,A}}^2)\langle
    u,\phi\rangle_{X_{0,A}}\, dt\\ &-\mathcal{R}\int_0^T\int_{\Omega}f(|u|)u\overline{\phi}\
    dx\, dt=0 \mbox{ for all $ \phi\in X_{0,A}(\Omega).$}
\end{align*}

\begin{remark}  We note that  \cite{Zuo} established the existence of a weak solution $u\in X_{0,A}(\Omega)$ to problem \eqref{problem1}, based on hypotheses (F) and (M1) (see \cite[Theorem 2.1]{Zuo} for the precise requirements). Imposing also  (M2), they  obtained that weak solutions   blow-up in a finite time, provided that the initial energy is negative (see \cite[Theorem 2.2]{Zuo}).  Further, problem \eqref{problem1}  has a weak solution for all $T>0$ (namely, a global solution) such that $u\in L^{\infty}(0,\infty;L^2(\Omega))\cap L^{\infty}(0,\infty;X_{0,A})
    \cap C(0,\infty;L^2(\Omega))$ with $ u_t\in L^2(0,\infty;L^2(\Omega))$, provided that  $u_0 \in \mathcal{W}$ (see \cite[Theorem 2.3]{Zuo}).
\end{remark}

\section{Main result}

 Let $u=u(t)$ be solution of \eqref{problem1} with initial data
$u_0\in X_{0,A}$, then by $T=T(u_0)$ we denote the maximal existence time  of $u=u(t)$ given as
\begin{itemize}
    \item[(i)] $T=\infty$ if $u(t)\in X_{0,A}$ for $t \in [0,\infty)$;
        \item[(ii)] $T=t_{\max}\, (>0)$ if $u(t)\in X_{0,A}$ for $t \in [0,t_{\max})$, $u(t_{\max})\not\in X_{0,A}$.
\end{itemize}

\begin{theorem}\label{theo1} Assume that hypotheses (F), (M1), (M2)
and (P) hold and $p\in(2,2_s^{\ast})$ $(n>2s)$. Let $u=u(t)$ be a
solution of \eqref{problem1} with $u_0\in X_{0,A}$ such
that $J(u_0)\leq d$ and $I(u_0)>0$. If the maximal existence time
is $T=\infty$, then there exists an increasing sequence
$\{t_k\}_{k=1}^{\infty}$ with $t_k\rightarrow\infty$ as
$k\rightarrow\infty$, such that $u(t_k)$ converges to a stationary
solution $v\in X_{0,A}(\Omega)$ of problem \eqref{problem1}, that is
$u(t_k)\rightarrow v$  as $k\rightarrow\infty$.
\end{theorem}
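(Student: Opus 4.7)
The plan is to exploit the gradient-flow dissipation of \eqref{problem1} together with the invariance of the stable set $\mathcal{W}$. Using $u_t$ as test function in the weak formulation and taking real parts yields the energy identity
\begin{equation*}
\int_0^t \|u_\tau\|_{L^2(\Omega)}^2\, d\tau + J(u(t)) = J(u_0)\leq d\qquad\text{for every }t>0,
\end{equation*}
so $J(u(\cdot))$ is non-increasing and $\int_0^\infty\|u_t\|_{L^2(\Omega)}^2\, dt\leq d$. A standard continuity argument (if $I(u(t_0))=0$ for some $t_0>0$ then $u(t_0)$ is admissible in the mountain-pass infimum, forcing $J(u(t_0))\geq d$, which under the monotonicity of $J$ is compatible only with the trivial stationary case) shows $u(t)\in \mathcal{W}$ for all $t\geq 0$. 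On $\mathcal{W}$, the positivity $I(u)>0$ together with (F) and (M2) yields $\int_\Omega F(|u|)\, dx\leq\frac{\mu}{\gamma}\mathscr{M}(\|u\|_{X_{0,A}}^2)$, so that $J(u)\geq \bigl(\tfrac12-\tfrac{\mu}{\gamma}\bigr)\mathscr{M}(\|u\|_{X_{0,A}}^2)\geq c\|u\|_{X_{0,A}}^{2\theta}$ by (M1) and (P). The uniform-in-time bound $\|u(t)\|_{X_{0,A}}\leq C(d)$ follows.

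Next I would extract the target subsequence. From $u_t\in L^2(0,\infty;L^2(\Omega))$ one picks $t_k\to\infty$ with $\|u_t(t_k)\|_{L^2}\to 0$; the uniform $X_{0,A}$-bound, after a further subsequence, gives $u(t_k)\xrightarrow{w} v$ in $X_{0,A}$, $u(t_k)\to v$ in $L^p(\Omega,\mathbb{C})$ by the compact fractional embedding since $p<2_s^\ast$, almost everywhere convergence, and $\|u(t_k)\|_{X_{0,A}}^2\to \ell$ for some $\ell\geq\|v\|_{X_{0,A}}^2$. Passing to the limit in the equation evaluated at $t_k$: the parabolic term vanishes, $M(\|u(t_k)\|_{X_{0,A}}^2)\to M(\ell)$ by continuity, and the growth bound $|f(r)r|\leq pCr^{p-1}$ in (F) combined with strong $L^p$-convergence passes to the limit in the nonlinear term, producing
\begin{equation*}
M(\ell)\langle v,\phi\rangle_{X_{0,A}} = \mathcal{R}\int_\Omega f(|v|)\,v\,\overline{\phi}\, dx\qquad\text{for every }\phi\in X_{0,A}.
\end{equation*}

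The decisive step, and the main technical obstacle because the Kirchhoff coefficient is nonlocal, is upgrading weak to strong convergence via the identification $\ell=\|v\|_{X_{0,A}}^2$. I would use a Nehari-type identity: testing the time-$t_k$ equation against $u(t_k)$ itself gives $I(u(t_k))=-\mathcal{R}\langle u_t(t_k),u(t_k)\rangle_{L^2}$, whose right-hand side tends to $0$ by the Cauchy--Schwarz inequality and the uniform $L^2$-bound on $u(t_k)$. Letting $k\to\infty$ yields $M(\ell)\,\ell=\int_\Omega f(|v|)|v|^2\, dx$ while choosing $\phi=v$ in the limiting equation produces $M(\ell)\|v\|_{X_{0,A}}^2=\int_\Omega f(|v|)|v|^2\, dx$. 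Subtracting gives $M(\ell)(\ell-\|v\|_{X_{0,A}}^2)=0$; by (M1) one has $M(\ell)>0$ whenever $\ell>0$, and the degenerate case $\ell=0$ forces $v=0$, which is itself a stationary solution approached in norm. Hence $\ell=\|v\|_{X_{0,A}}^2$, and weak convergence in the Hilbert space $X_{0,A}$ together with convergence of norms upgrades to strong convergence $u(t_k)\to v$ in $X_{0,A}$. The limiting identity displayed above is then exactly the weak formulation of the stationary problem \eqref{problem2}, identifying $v$ as a stationary solution and completing the proof.
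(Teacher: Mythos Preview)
Your proof is correct and follows the same overall strategy as the paper: exploit the energy identity to find $t_k\to\infty$ with $\|u_t(t_k)\|_{L^2}\to0$, establish an $X_{0,A}$-bound, extract a weak limit, pass to the limit in the nonlinearity via the subcritical growth, and then upgrade weak to strong convergence by showing $I(u(t_k))\to0$.

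Two places where your execution differs from the paper's are worth flagging. For boundedness, the paper does \emph{not} use invariance of $\mathcal{W}$; instead it first bounds $\|J'(u(t_k))\|_{X_{0,A}'}$ via the Poincar\'e-type eigenvalue inequality, then controls $|I(u(t_k))|$ by Young's inequality and feeds this into a coercivity estimate for $J$ along the particular sequence. Your route through $I(u(t))>0\Rightarrow J(u(t))\geq c\|u(t)\|_{X_{0,A}}^{2\theta}$ is more direct and yields a uniform-in-$t$ bound rather than just a bound on the subsequence. For the upgrade to strong convergence, the paper works with $M(\|u_k\|_{X_{0,A}}^2)\langle u_k,u_k-v\rangle_{X_{0,A}}\to0$ and expands the inner product, whereas you identify $\ell=\|v\|_{X_{0,A}}^2$ by subtracting the two Nehari identities and then invoke ``weak convergence plus norm convergence implies strong convergence'' in the Hilbert space; the two arguments are equivalent, and your explicit treatment of the degenerate case $\ell=0$ is cleaner than the paper's appeal to (M1) at that point.
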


\begin{proof} We show   global in time solutions to   \eqref{problem1} can not blow-up in infinite time, by arguing in three steps.

    \smallskip

\textbf{Step 1: Existence of an increasing sequence $\{t_k\}$.}

 Let
$u=u(t)$ be a solution of problem \eqref{problem1} with $u_0\in
X_{0,A}$ and maximal existence time $T=\infty$. Now, \cite[Theorem
2.3]{Zuo} gives us
$u\in L^{\infty}(0,\infty;L^2(\Omega))\cap L^{\infty}(0,\infty;X_{0,A})
\cap C(0,\infty;L^2(\Omega))$, $u_t\in L^2(0,\infty;L^2(\Omega)).
$
Multiplying equation \eqref{problem1} by $\phi\in X_{0,A}$ and integrating
over $\Omega$, we get
\begin{align}\label{eq1}
 (u'(t),\phi) =-M(\|u\|_{X_{0,A}}^2)\langle
u(t),\phi\rangle_{X_{0,A}}
+\mathcal{R}\int_{\Omega}f(|u(t)|)u(t)\overline{\phi}\ dx.
\end{align}

By \cite[Lemma 3.3]{Zuo} $J(u(t))$ is non-increasing with respect to $t$, hence
\begin{align}\label{eq2}
0\leq J(u(t))\leq J(u_0),
\end{align}
where we use a
contradiction argument to conclude the first inequality in \eqref{eq2}. So, we assume
that there exists a time $t_0$ such that $J(u(t_0))<0$, then by
 \eqref{J} we deduce that
$0>J(u(t_0))=\frac{1}{2}\mathscr{M}(\|u(t_0)\|_{X_{0,A}}^2)
-\int_{\Omega}F(|u(t_0)|)\ dx.$ It follows that
\begin{align}\label{eq3}
\frac{1}{2}\mathscr{M}(\|u(t_0)\|_{X_{0,A}}^2)<\int_{\Omega}F(|u(t_0)|)\
dx.
\end{align}

Combining the Nehari functional $I(\cdot)$, together with assumptions \eqref{f}, (M2), (P) and inequality
\eqref{eq3}, we conclude that
\begin{align*}
I(u(t_0))=&M(\|u(t_0)\|_{X_{0,A}}^2)\|u(t_0)\|_{X_{0,A}}^2-
\mathcal{R}\int_{\Omega}f(|u(t_0)|)|u(t_0)|^2\ dx\\
\leq&\mu\mathscr{M}(\|u(t_0)\|_{X_{0,A}}^2)-\gamma\int_{\Omega}
F(|u(t_0)|)\ dx
<\mu
\mathscr{M}(\|u(t_0)\|_{X_{0,A}}^2)-\frac{\gamma}{2}
\mathscr{M}(\|u(t_0)\|_{X_{0,A}}^2)\\
\leq&(\mu-\frac{p}{2})\mathscr{M}(\|u(t_0)\|_{X_{0,A}}^2)<0 \quad \mbox{(recall that $\gamma \geq p$)}.
\end{align*}
So, at time $t=t_0$ we have $J(u(t_0))<0$ and $I(u(t_0))<0$,
hence $u(t_0)$ is in the so-called unstable set $\mathcal{U}=\{u\in
X_{0,A} :I(u)<0, \, J(u)<d \mbox{ ($d>0$})\}$. By \cite[Theorem
2.4]{Zuo}, $u(t)$ blows-up in a finite time,
which contradicts
$T=\infty$.
Since $J(u(t))$ is non-increasing with respect to $t$, by
\eqref{eq2} we can find $c$ with
$0\leq c\leq J(u_0)$ and such that $J(u(t))\rightarrow c$ as
$t\rightarrow\infty$. Passing to the limit as $t\rightarrow\infty$ in
$
\int_0^t\|u'(s)\|_{L^2(\Omega)}^2\ ds+J(u)=J(u_0),
$
then we get
$c=J(u_0)-\int_0^{\infty}\|u'(s)\|_{L^2(\Omega)}^2\ ds.$
It follows that we can find an increasing sequence
$\{t_k\}_{k=1}^{\infty}$ with $t_k\rightarrow\infty$ as
$k\rightarrow\infty$ satisfying
\begin{align}\label{eq6}
\lim_{k\rightarrow\infty}\|u'(t_k)\|_{L^2(\Omega)}=0.
\end{align}

\textbf{Step 2: Convergence of $\{u(t_k)\}$ to a function $v\in
X_{0,A}$. }

Equation \eqref{eq1} leads to the following
$$
\langle J'(u(t)),\phi\rangle
=M(\|u(t)\|_{X_{0,A}}^2)\langle
u(t),\phi\rangle_{X_{0,A}}-
\mathcal{R}\int_{\Omega}f(|u(t)|)u(t)\overline{\phi}\ dx
=(-u'(t),\phi) \mbox{ for all $\phi\in X_{0,A}$.}
$$
Combining the Schwartz inequality, the
definition of the first eigenvalue $\lambda_1$ of  $(-\Delta)_A^s$ (see \cite[Proposition
3.3]{Vecchi}) and the limit in  \eqref{eq6}, we conclude
 \begin{align}\label{zero}
&\|J'(u(t_k))\|_{X_{0,A}'}=\sup_{\stackrel{\phi\in X_{0,A}}
{\|\phi\|_{X_{0,A}}=1}}\langle J'(u(t_k)),\phi \rangle =
\sup_{\stackrel{\phi\in X_{0,A}}{\|\phi\|_{X_{0,A}}=1}}(-u'(t_k),\phi)\nonumber\\
\leq& \sup_{\stackrel{\phi\in
X_{0,A}}{\|\phi\|_{X_{0,A}}=1}}\|u'(t_k)\|_{L^2(\Omega)}
\|\phi\|_{L^2(\Omega)}\nonumber
\leq\|u'(t_k)\|_{L^2(\Omega)}\sup_{\phi\in
X_{0,A}\setminus\{0\}}\left(\frac{\|\phi\|_{L^2(\Omega)}}{\|\phi\|_{X_{0,A}}}\right)
\\
\leq &\|u'(t_k)\|_{L^2(\Omega)}\left(\displaystyle\inf_{\phi\in
X_{0,A}\setminus\{0\}}\left(\frac{\|\phi\|_{X_{0,A}}}{\|\phi\|_{L^2(\Omega)}}\right)\right)^{-1}
\leq
\frac{1}{\sqrt{\lambda_1}}\|u'(t_k)\|_{L^2(\Omega)}\rightarrow0
\end{align}
as $k\rightarrow\infty$. As usual, by $X_{0,A}'$ we denote the dual space of $X_{0,A}$, hence we can find $c_1>0$,
independent of the index $k$, such that
\begin{align}\label{eq7}
\|J'(u(t_k))\|_{X_{0,A}'}\leq c_1,\ k=1,2,\cdots.
\end{align}

For the Nehari energy functional $I(\cdot)$, the bound from above in \eqref{eq7} and the Young inequality lead to
\begin{equation}\label{eq8}
|I(u(t_k))|\leq|\langle J'(u(t_k)),u(t_k)\rangle| \leq
\|J'(u(t_k))\|_{X_{0,A}'}\|u(t_k)\|_{X_{0,A}}\nonumber \leq
c_1\|u(t_k)\|_{X_{0,A}}
\leq c_2+\frac{(p-2\mu)m_0}{4p\mu}\|u(t_k)\|_{X_{0,A}}^{2\theta}.
\end{equation}

Using  $J(\cdot)$ and $I(\cdot)$, together with  \eqref{eq8} and (M1), we
obtain
{\small \begin{align*}
&J(u(t_k))=\frac{1}{2}\mathscr{M}\left(\|u(t_k)\|_{X_{0,A}}^2\right)
-\int_{\Omega}F(|u(t_k)|)\ dx\\
\geq&\frac{1}{2\mu}M(\|u(t_k)\|_{X_{0,A}}^2)\|u(t_k)\|_{X_{0,A}}^2
-\frac{1}{\gamma}\int_{\Omega}f(|u(t_k)|)|u(t_k)|^2\ dx\\
=&\frac{1}{2\mu}M(\|u(t_k)\|_{X_{0,A}}^2)\|u(t_k)\|_{X_{0,A}}^2
+\frac{1}{p}I(u(t_k))-\frac{1}{p}M(\|u(t_k)\|_{X_{0,A}}^2)
\|u(t_k)\|_{X_{0,A}}^2\\
\geq&\frac{(p-2\mu)}{2p\mu}M(\|u(t_k)\|_{X_{0,A}}^2)\|u(t_k)\|_{X_{0,A}}^2
-c_2-\frac{(p-2\mu)}{4p\mu}m_0\|u(t_k)\|_{X_{0,A}}^{2\theta}\\
\geq&\frac{(p-2\mu)}{2p\mu}m_0\|u(t_k)\|_{X_{0,A}}^{2\theta}
-\frac{(p-2\mu)}{4p\mu}m_0\|u(t_k)\|_{X_{0,A}}^{2\theta}-c_2.
\end{align*}}

This inequality, taking into account the bound from above in \eqref{eq2}, gives us
$J(u_0)+c_2\geq\frac{(p-2\mu)}{4p\mu}m_0\|u(t_k)\|_{X_{0,A}}^{2\theta},$
which implies that
\begin{align}\label{eq9}
\|u(t_k)\|_{X_{0,A}}\leq\Big[\frac{(J(u_0)+c_2)4p\mu}{(p-2\mu)m_0}\Big]^{\frac{1}{2\theta}}, \quad
k=1,2,\cdots.
\end{align}

From \eqref{eq9} and the fact that the embedding
$X_{0,A}\hookrightarrow L^q(\Omega,\mathbb{C})$ is compact for all
$q\in[1,2_s^{\ast})$ (see \cite[Lemma 2.2]{Fiscella}), then there exist an increasing subsequence, still denoted by
$\{t_k\}_{k=1}^{\infty}$, and a function $v\in X_{0,A}$ such that
$u_k:=u(t_k)$ satisfies the following convergences
\begin{align}
&u_k\xrightarrow{w} v\mbox{  in }X_{0,A} \mbox{ as
}k\rightarrow\infty,\label{eq10}\\
&u_k\rightarrow v \mbox{ in }L^q(\Omega,\mathbb{C})\mbox{ for all }
q\in[1,2_s^{\ast}) \mbox{ as }k\rightarrow\infty.\label{eq11}
\end{align}

By \eqref{eq11}, there exist a subsequence, still denoted by
$\{u_k\}_{k=1}^{\infty}$, and a function $w\in
L^q(\Omega,\mathbb{C})$ for all $q\in[1,2_s^{\ast})$, such that
\begin{align}
&u_k(x)\rightarrow v(x)\mbox{ a.e. in }\Omega \mbox{ as
}k\rightarrow\infty,\label{eq12}\\
&\mbox{for all }k, |u_k(x)|\leq w(x)\mbox{ a.e. in
}\Omega.\label{eq13}
\end{align}

\textbf{Step 3: $v$ is a solution to the stationary problem \eqref{problem2}.}

Let us prove that for all $\phi\in X_{0,A}$ we have
\begin{align}\label{eq14}
\mathcal{R}\int_{\Omega}f(|u_k|)u_k\overline{\phi}\
dx\to\mathcal{R}\int_{\Omega}f(|v|)v\overline{\phi}\ dx \quad \mbox{as $k\rightarrow\infty$.}
\end{align}
 By
\eqref{eq12} and  $f\in C^1([0,\infty))$, we get
$
f(|u_k|)u_k\overline{\phi}\rightarrow f(|v|)v\overline{\phi}, $
for a.e. $x\in\Omega$ as $k$ to $\infty$.
Using \eqref{f} and \eqref{eq13} we
get
$
|f(|u_k|)u_k\overline{\phi}|\leq pC|w|^{p-2}|w||\phi|
=pC|w|^{p-1}|\phi|,$ for a.e. $x\in\Omega$.
The H\"older inequality implies
{\small $$ \int_{\Omega}pC|w|^{p-1}|\phi|\ dx\leq  c\|\phi\|_{L^p}\left(\int_{\Omega}
|w|^{(p-1)\frac{p}{p-1}}\ dx\right)^{\frac{p-1}{p}}=
c\|\phi\|_{L^p}\|w\|_{L^p}^{p-1}\leq c,
$$ }thanks to $w\in
L^q(\Omega,\mathbb{C})$ for all $q\in[1,2_s^{\ast})$ and
 $\phi\in X_{0,A}\hookrightarrow
L^p(\Omega,\mathbb{C})$ for $p\in(2,2_s^{\ast})$. We  conclude
$
|w|^{p-1}|\phi|\in L^1(\Omega).
$
The Lebesgue dominated convergence theorem gives us
\eqref{eq14}. We show that
\begin{align}\label{eq17.1}
\langle J'(u_k),\phi\rangle=M(\|u_k\|_{X_{0,A}}^2)
\langle
u_k,\phi\rangle-\mathcal{R}\int_{\Omega}f(|u_k|)u_k\overline{\phi}\
dx,
\end{align}
 converges to
$0=M(\|v\|_{X_{0,A}}^2)
\langle
v,\phi\rangle-\mathcal{R}\int_{\Omega}f(|v|)v\overline{\phi}\ dx $
 as $k\rightarrow\infty$ and $u_k\rightarrow v$ in $X_{0,A}$. We show that
$M(\|u_k\|_{X_{0,A}}^2)\rightarrow M(\|v\|_{X_{0,A}}^2)$ and $u_k\rightarrow v$ strongly in $X_{0,A}$ (see  \cite{Zhang}).
Since $M$ is continuous, \eqref{eq9}  implies
$M(\|u_k\|_{X_{0,A}}^2)\leq c$ for all  $k\in\mathbb{N}$, some $c>0$, hence $\{M(\|u_k\|_{X_{0,A}}^2)\}_{k\in
\mathbb{N}}$ is bounded  in $\mathbb{R}$. Now, there is a subsequence, still say
$\{M(\|u_k\|_{X_{0,A}}^2)\}_{k\in
\mathbb{N}}$, converging to  $\overline{M}$, so
$\lim_{k\rightarrow\infty}M(\|u_k\|_{X_{0,A}}^2)\langle
v,\phi\rangle_{X_{0,A}} =\overline{M}\langle
v,\phi\rangle_{X_{0,A}}$
and
$$\lim_{k\rightarrow\infty}\int\int_{\mathbb{R}^{2n}}\left[
M(\|u_k\|_{X_{0,A}}^2)-\overline{M}\right]^2
\frac{|v(x)-e^{i(x-y)\cdot
A\left(\frac{x+y}{2}\right)}v(y)|^2}{|x-y|^{n+2s}}\ dx\ dy=0,$$ that
is, $
M(\|u_k\|_{X_{0,A}}^2)v\rightarrow \overline{M}v\mbox{
 in }X_{0,A}.
$ This together with \eqref{eq10} imply
$\lim_{k\rightarrow\infty}M(\|u_k\|_{X_{0,A}}^2)
\langle u_k,v\rangle_{X_{0,A}}=\overline{M}\langle
v,v\rangle_{X_{0,A}}.$
By \eqref{zero}, \eqref{eq10} and
\eqref{eq14}, we pass to the limit as $k\rightarrow\infty$ in
\eqref{eq17.1} to get
$0=\overline{M}\langle
v,\phi\rangle_{X_{0,A}}-\mathcal{R}\int_{\Omega}f(|v|)v\overline{\phi}\
dx$ for all $\phi\in X_{0,A}$.
For $\phi=v$, we get
$\overline{M}\langle v,v\rangle_{X_{0,A}}=\int_{\Omega}f(|v|)|v|^2\ dx.$ Similar to  \eqref{eq14}, we deduce
$\lim_{k\rightarrow\infty}\int_{\Omega}f(|u_k|)|u_k|^2\ dx=\int_{\Omega}
f(|v|)|v|^2\ dx.$
So, \eqref{eq10}, \eqref{zero} lead to
$
|\langle
J'(u_k),u_k\rangle|\leq\|J'(u_k)\|_{X_{0,A}'}\|u_k\|_{X_{0,A}}\\
\leq \|J'(u_k)\|_{X_{0,A}'}\left[\frac{(J(u_0)+c_2)4p\mu}{(p-2\mu)m_0}
\right]^{\frac{1}{2\theta}} \to 0$ as $k\rightarrow\infty$.
We first deduce that
\begin{align*}
&\lim_{k\rightarrow\infty}M(\|u_k\|_{X_{0,A}}^2)\langle
u_k,u_k \rangle_{X_{0,A}}=\lim_{k\rightarrow\infty}(\langle
J'(u_k),u_k\rangle+\int_{\Omega}f(|u_k|)|u_k|^2\ dx)\\
&=\int_{\Omega}f(|v|)|v|^2\ dx=\lim_{k\rightarrow\infty}M(
\|u_k\|_{X_{0,A}}^2)\langle u_k,v\rangle_{X_{0,A}},
\end{align*}
then
$
\lim_{k\rightarrow\infty}M(\|u_k\|_{X_{0,A}}^2)(
\langle u_k,u_k\rangle_{X_{0,A}}-\langle
u_k,v\rangle_{X_{0,A}})=0,
$
and so
\begin{align*}
0=&\lim_{k\rightarrow\infty}M(\|u_k\|_{X_{0,A}}^2)
\langle u_k,u_k-v\rangle_{X_{0,A}}\\
=&\lim_{k\rightarrow\infty}M(\|u_k\|_{X_{0,A}}^2)\left[
\|u_k-v\|_{X_{0,A}}^2+\langle v,u_k-v\rangle_{X_{0,A}}\right]\\
=&\lim_{k\rightarrow\infty}M(\|u_k\|_{X_{0,A}}^2)\|u_k
-v\|_{X_{0,A}}.
\end{align*}
Since $M(\sigma)\geq m_0\sigma^{\theta-1}$ for all $\sigma\geq0$,
then
$\lim_{k\rightarrow\infty}\|u_k-v\|_{X_{0,A}}=0.$ So, $u_k(x)\rightarrow v(x)$ in
$X_{0,A}$, which implies that
$\|u_k\|_{X_{0,A}}^2\rightarrow\|v\|_{X_{0,A}}^2$ as $k\rightarrow\infty$. Using the
continuity of $M$, we get
$\lim_{k\rightarrow\infty}M(\|u_k\|_{X_{0,A}}^2)= M(\|v\|_{X_{0,A}}^2),$
which allows us to conclude that
$\overline{M}=M(\|v\|_{X_{0,A}}^2)$.

\end{proof}


\end{document}